\newcommand{\forces}{\Vdash}
\newcommand{\borel}{{\tt Borel}}
\newcommand{\ttspan}{{\tt span}}
\newcommand{\cf}{{\rm cf}}
\newcommand{\add}{{\rm add}}
\newcommand{\cof}{{\rm cof}}
\newcommand{\cov}{{\rm cov}}
\newcommand{\non}{{\rm non}}
\newcommand{\covt}{{\rm cov}^*}
\newcommand{\cB}{{\mathcal B}}
\newcommand{\cD}{{\mathcal D}}
\newcommand{\cF}{{\mathcal F}}
\newcommand{\cI}{{\mathcal I}}
\newcommand{\cJ}{{\mathcal J}}
\newcommand{\bK}{{\mathbf K}}
\newcommand{\cK}{{\mathcal K}}
\newcommand{\cN}{{\mathcal N}}
\newcommand{\cM}{{\mathcal M}}
\newcommand{\cP}{{\mathcal P}}
\newcommand{\bbQ}{{\mathbb Q}}
\newcommand{\mbR}{{\mathbb R}}
\newcommand{\cS}{{\mathcal S}}
\newcommand{\bX}{{\mathbf X}}
\newtheorem{theorem}{Theorem}[section]
\newtheorem{corollary}[theorem]{Corollary}
\theoremstyle{definition}
\newtheorem{problem}[theorem]{Problem} 
\newtheorem{definition}[theorem]{Definition}
\theoremstyle{remark}
\newtheorem{remark}[theorem]{Remark}
\begin{document}

\title{On Borel hull operations}

\author{Tomasz Filipczak}
\address{Institute of Mathematics\\ 
Technical University of \L \'{o}d\'{z}\\
ul. W\'{o}l\-cza\'{n}\-ska 215\\
93-005 \L \'{o}d\'{z}, Poland}
\email{tomasz.filipczak@p.lodz.pl}

\author{Andrzej Ros{\l}anowski}
\address{Department of Mathematics\\
 University of Nebraska at Omaha\\
 Omaha, NE 68182-0243, USA}
\email{roslanow@member.ams.org}
\urladdr{http://www.unomaha.edu/logic}

\author{Saharon Shelah}
\address{Institute of Mathematics\\
 The Hebrew University of Jerusalem\\
 91904 Jerusalem, Israel\\
 and  Department of Mathematics\\
 Rutgers University\\
 New Brunswick, NJ 08854, USA}
\email{shelah@math.huji.ac.il}
\urladdr{http://www.math.rutgers.edu/$\sim$shelah}

\thanks{The second and the third author acknowledge support from the United
  States-Israel Binational Science Foundation (Grant no.~2010405). 
  Publication 1031 of the third author.}  

\subjclass{Primary 54H05, 28A05; Secondary:03E15, 03E17}
\date{January 31, 2014}

\begin{abstract}
We show that some set-theoretic assumptions (for example Martin's Axiom)
imply that there is no translation invariant Borel hull operation on the
family of Lebesgue null sets and on the family of meager sets (in
$\mathbb{R}^{n}$). We also prove that if the meager ideal admits a monotone
Borel hull operation, then there is also a monotone Borel hull operation on
the $\sigma$--algebra of sets with the property of Baire.  
\end{abstract}

\maketitle

\section{Introduction}
On several occasions a property of subsets of the real line $\mbR$ is 
introduced by means of a cover or a representation of the given set in terms 
of other sets. For instance,
\begin{enumerate}
\item[(i)] a set $A\subseteq \mbR$ is meager if $A \subseteq
  \bigcup\limits_{n<\omega} A_n$ for some closed nowhere dense sets
  $A_0,A_1,A_2,\ldots \subseteq \mbR$;
\item[(ii)] a set $A\subseteq \mbR$ is said to be $\Sigma^0_\xi$ if 
  $A=\bigcup\limits_{n<\omega} A_n$ for some $A_0,A_1,A_2,\ldots \in
  \bigcup\limits_{\zeta<\xi} \Pi^0_\zeta$;
\item[(iii)] a set $A\subseteq \mbR$ is Lebesgue measurable if $A\subseteq B$ for
  some Borel set $B\subseteq \mbR$ such that $B\setminus A$ is Lebesgue
  negligible;
\item[(iv)] a set $A\subseteq \mbR$ has the Baire property if for some Borel set
  $B\subseteq \mbR$ we have $A\subseteq B$ and $B\setminus A$ is meager, etc. 
\end{enumerate}
It is natural to ask if the ``witnesses'' in the above definitions can be
chosen in a somewhat canonical or uniform way. For instance, we may wonder
if they can depend monotonically on the input set $A$ or if they can be
translation invariant. Thus, in the relation to definition (i), we
may ask if there are mappings $\varphi_0,\varphi_1, \varphi_2,\ldots$
defined on the ideal of meager subsets of $\mbR$, with values in the family
of all closed nowhere dense subsets of $\mbR$ such that 
\begin{itemize}
\item[$(\circledast)$] $A\subseteq \bigcup_{i<\omega} \varphi_i(A)$ (for
  each meager $A\subseteq \mbR$) 
\end{itemize}
and with the property that one of the following two demands is satisfied:
\begin{itemize}
\item[$(\circledast)^{\rm trans}$] for every meager set $A$ and a real
  number $r$ we have $\varphi_i(A+r)=\varphi_i(A)+r$ for all $i$, 
or
\item[$(\circledast)^{\rm monot}$] for all meager sets $A,B$ such that
  $A\subseteq B$ we have  $\varphi_i(A)\subseteq \varphi_i(B)$ for all $i$. 
\end{itemize}
Easily, neither of these is possible.  Suppose towards contradiction that
there are such mappings $\varphi_i$ (for $i<\omega$) satisfying
$(\circledast)+(\circledast)^{\rm trans}$. Then for each rational number
$q\in \bbQ$ we have $\varphi_i(\bbQ)=\varphi_i(\bbQ+q)=\varphi_i(\bbQ) +q$
and hence each $\varphi_i(\bbQ)$ is a closed nowhere dense set invariant
under rational translations and this is impossible. Let us argue that the
mappings $\varphi_i$ (for $i<\omega$) cannot be monotone. So suppose they
satisfy $(\circledast)+(\circledast)^{\rm monot}$. By induction on
$\alpha<\omega_1$ construct a sequence $\langle A_\alpha:\alpha<\omega_1
\rangle$ of meager sets so that
\[\bigcup_{\beta<\alpha}\bigcup_{i<\omega} \varphi_i(A_\beta) \subsetneq
  A_\alpha\quad \mbox{ for all }\alpha<\omega_1.\] 
Then for some $n$ the sequence $\langle \varphi_n(A_\alpha):\alpha<\omega_1
\rangle$ of closed sets has a strictly increasing (cofinal) subsequence, a
contradiction. 

A similar question associated with definition (ii) also has the negative
answer: M\'{a}trai and Zelen\'{y} \cite{MaZe09} showed that it is not
possible to choose monotone presentations for $\Sigma^0_\xi$ sets. However,
problems concerning the monotonicity of the choice of witnesses for (iii)
and (iv) cannot be decided within the standard set theory.  Elekes and
M\'{a}th\'{e} \cite{ElMa09} proved that the existence of monotone Borel
hulls for measurable sets is independent from ZFC, and parallel results for
the Baire property were given by Balcerzak and Filipczak \cite{BaFi11}.

\subsection*{Notation and basic definitions}
In the current note $\bX$ is a Polish space, $\borel$ denotes the family of
all Borel subsets of $\bX$, $\cM$ is the $\sigma$--ideal of all meager
subsets of $\bX$, and $\cN$ is the $\sigma$--ideal of all Lebesgue
negligible subsets of $\mbR^n$. The same notation $\cM,\borel$ will be used
in $\mbR^n$, too. 

Let $\cI$ be a $\sigma$--ideal of subsets of $\bX$. We say that a
  family $\cD\subseteq\cI$ is a {\em base\/} of $\cI$ if 
\[\big(\forall A\in \cI\big) \big( \exists B\in\cD\big) \big(A\subseteq
B\big).\]
We say that $\cI$ has a {\em Borel basis\/} if every set from $\cI$ can be
covered by a Borel set from the ideal $\cI$, i.e., $\borel\cap\cI$ is a
base of $\cI$. For such a $\sigma$--ideal $\cI$, let $\cS_\cI$ denote the
$\sigma$--algebra of subsets of $\bX$ generated by ${\rm Borel}\cup\cI$. 
Thus, in particular, the $\sigma$--algebra ${\tt Baire}$ of all sets with
the Baire property is ${\tt Baire}=\cS_{\cM}$.

Let $\cI\subseteq\cP(\bX)$ be a proper $\sigma$--ideal with a Borel base and
containing all finite subsets of $\bX$. Cardinal coefficients of $\cI$ are
defined as follows:  
\begin{align*}
\add\left( \cI\right) & :=\min\left\{\left\vert\cF\right\vert:\cF\subseteq
\cI,\ \bigcup \cF\notin \cI\right\} , \\
\cov\left( \cI\right) & :=\min\left\{\left\vert \cF\right\vert :\cF\subseteq
  \cI,\ \bigcup \cF= \bX\right\} , \\ 
\non\left( \cI\right) & :=\min \left\{\left\vert A\right\vert :A\subseteq X,\
  A\notin \cI\right\} , \\ 
\cof\left( \cI\right) & :=\min \left\{\left\vert \cF\right\vert :\cF\subseteq
  \cI,\ \left( \forall A\in \cI\right)\left(\exists B\in\cF\right) (A\subseteq
  B\right)\}.
\end{align*}
If $\bX=\mbR^n$, $r\in\mbR^n$ and $A,B\subseteq \mbR^n$, then we define
$A+r=\{a+r:a\in A\}$ and $A+B=\{a+b:a\in A,\ b\in B\}$. A family $\cF$ of
subsets of $\mbR^n$ is {\em translation invariant\/} if $A+r\in\cF$ for all
$A\in\cF$ and $r\in\mbR^n$. For a translation invariant ideal $\cI\subseteq
\cP(\mbR^n)$ we define the {\em transitive covering number\/} of $\cI$ as
\[\covt\left( \cI\right):=\min \left\{ \left\vert A \right\vert
  :A\subseteq \mbR^n,\ (\exists B\in \cI)(A+B=\mbR^n)\right\}.\]

For systematic study of the cardinal invariants mentioned above for the case
of $\cN$ and $\cM$ we refer the reader to Bartoszy\'{n}ski and Judah
\cite{BaJu95}.  

\begin{definition}
Let $\cI$ be a $\sigma$--ideal on $\bX$ with Borel basis and $\cF\subseteq
\cS_\cI$. 
\begin{enumerate}
\item A {\em Borel hull operation\/} on $\cF$ with respect to 
$\cI$ is a mapping $\psi:\cF\longrightarrow\borel$ such that $A\subseteq
\psi(A)$ and $\psi(A)\setminus A\in\cI$ for all $A\in\cF$.  
\item If the range of a Borel hull operation $\psi$ consists of sets of some
  Borel class $\cK$, then we say that $\psi$ is a {\em $\cK$ hull
    operation}. 
\item A Borel hull operation $\psi$ on $\cF$ is {\em monotone\/} if
  $\psi(A_1)\subseteq\psi(A_2)$ whenever $A_1\subseteq A_2$ are from $\cF$.  
\item Assume $\bX=\mbR^n$ and both $\cI$ and $\cF$ are translation
  invariant. If a Borel hull operation $\psi$ on $\cF$ satisfies $\psi(A+x)
  =\psi(A)+x$ for all $A\in\cF$ and $x\in\mathbb{R}^n$, then $\psi $ is
  called {\em a translation invariant hull operation}. 
\end{enumerate}
\end{definition}

By \cite{ElMa09,BaFi11}, under CH there exist monotone Borel hull 
operations on $\cS_{\cI}$ where $\cI$ denotes either the ideal of 
Lebesgue negligible sets or the meager ideal. Adding many random or 
Cohen reals to a model of CH gives a model with no monotone Borel hull 
operations for $\cI$ (where $\cI$ is either the null or the meager ideal, 
respectively). More examples of universes with and without monotone Borel 
hulls for the null and meager ideals were given in Ros{\l}anowski and 
Shelah \cite{RoSh:972}. 

\subsection*{The content of the paper}
In \cite[Question 4.2]{ElMa09}, the authors ask if it is possible to define
a Borel hull operation on $\cN$ which is monotone and translation
invariant. In the second section we show that some set-theoretic assumptions
(for example Martin's Axiom) imply that there is no translation invariant
Borel hull operation on $\cN$ and on $\cM$ (even without the requirement of
monotonicity).
\medskip

The non-existence of monotone Borel hull operations on $\cI$ implies 
non-existence of such operations on $\cS_\cI$ but not much had been known 
about the converse implication. In particular, Balcerzak and Filipczak 
\cite[Question 2.23]{BaFi11} asked if it is possible that there exists a 
monotone Borel hull operation on $\cI$ (with respect to $\cI$) but there 
is no such hull operation on $\cS_\cI$ for a ccc ideal $\cI$. In the third
section we give a negative answer for the case of the meager ideal. We show
that the existence of a monotone Borel hull operation on $\cM$ (with respect to 
$\cM$) is equivalent with the existence of such hull operation on 
$\cS_{\cM}$.

\section{No translation invariant Borel hulls operations}
It is known that pairs $\left( \cS_{\cN},\cN\right) $ and $\left(
  \cS_\cM,\cM\right)$ have the Extended Steinhaus Property, i.e., for any
$A,B\in \cS_\cN\setminus\cN$ ($\cS_\cM\setminus \cM$, resp.)  the set
$A-B=\left\{ a-b:a\in A,b\in B\right\}$ has an interior point. Many variants
of Steinhaus Property have been investigated in the literature (see
Bartoszewicz, Filipczak and Natkaniec \cite{BFN11}). We need a
generalization in which only one of the sets $A$, $B$ has to belong to the
$\sigma$--algebra. In \cite{Mc50} and \cite{BCS58} such properties were proved
for topological groups (locally compact groups with complete Haar measure,
resp.). We formulate them for $\mbR^n$. 

\begin{theorem}
\label{c}
\begin{enumerate}
\item \label{c1}\cite[Cor. 4]{Mc50} 
  If $A,B\subseteq \mbR^n$ are of the second category and $A$ has the Baire
  property, then $A+B$ has an interior point.
\item \label{c2}\cite[Thm 1]{BCS58} 
  If $A,B\subseteq \mbR^n$ are not Lebesgue null sets and $A$ is measurable,
  then $A+B$ has an interior point.
\end{enumerate}
\end{theorem}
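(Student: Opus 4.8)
The plan is to establish both parts by the same Pettis-type sliding argument. In each case we localize $A$ at a scale at which it is ``large'' --- comeager in a ball in part \ref{c1}, of density close to $1$ in a ball in part \ref{c2} --- then localize $B$ at a fixed smaller scale at which it is still large, and finally check that for every $x$ in a whole ball around the sum of the two centers some translate of the small region for $B$ lies inside the region for $A$; this yields, for each such $x$, a representation $x=a+b$ with $a\in A$ and $b\in B$, so that ball is contained in $A+B$.

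For part \ref{c1}: since $A$ has the Baire property and is non-meager, fix a ball $V=B(a,\delta)$ with $V\setminus A\in\cM$. Using a countable base of $\mbR^n$ consisting of balls of radius $<\delta/3$ and the fact that $B\notin\cM$, fix a ball $V'=B(a',\delta')$ with $\delta'<\delta/3$ and $B\cap V'\notin\cM$. We claim $B(a+a',\delta/3)\subseteq A+B$. Indeed, for $x$ with $\|x-(a+a')\|<\delta/3$ the triangle inequality gives $V'\subseteq B(x-a,\delta)=x-V$, and since $\cM$ is translation invariant, $(x-V)\setminus(x-A)=x-(V\setminus A)\in\cM$, hence $V'\setminus(x-A)\in\cM$. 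Therefore $(x-A)\cap B\cap V'$ is not meager (otherwise $B\cap V'$ would be meager), so it is nonempty; any $b$ in it gives $x=(x-b)+b\in A+B$.

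Part \ref{c2} follows the same scheme with the Lebesgue density theorem in place of the category observation; since $B$ need not be measurable we work through a measurable hull $H\supseteq B$ of $B$, using $|B\cap E|^{*}=|H\cap E|$ for every measurable $E$. Let $a$ be a density point of $A$ and $a'$ a density point of $H$, and choose $\delta>0$ small enough that $|V\setminus A|\le\tfrac14\cdot 3^{-n}|V|$ and $|H\cap V'|\ge\tfrac34|V'|$, where $V=B(a,\delta)$ and $V'=B(a',\delta/3)$; note $|V'|=3^{-n}|V|$, so $|V\setminus A|\le\tfrac14|V'|$. For $x$ with $\|x-(a+a')\|<\delta/3$ we again have $V'\subseteq x-V$, whence $|(x-A)\cap V'|\ge|V'|-|V\setminus A|\ge\tfrac34|V'|$. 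Combined with $|B\cap V'|^{*}=|H\cap V'|\ge\tfrac34|V'|$, this forces $(x-A)\cap B\cap V'\neq\emptyset$, and as before $B(a+a',\delta/3)\subseteq A+B$.

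The only delicate point is the matching of the two scales: $B$ has to be localized at a scale that is a fixed fraction of --- not arbitrarily smaller than --- the scale used for $A$, since otherwise the density requirement on $A$ in part \ref{c2} could not be met at a single small radius. Everything else is routine; as the result is classical, one may instead simply invoke \cite{Mc50} and \cite{BCS58}.
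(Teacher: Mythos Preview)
Your proof sketch is correct; the Pettis-type sliding argument you give is exactly the classical route to both Steinhaus-type statements, and the details (the $\delta/3$ scale matching, the use of a measurable hull $H$ for the possibly non-measurable $B$ together with $|B\cap E|^{*}=|H\cap E|$ for measurable $E$) are handled properly. Note, however, that the paper does not prove this theorem at all: it merely quotes it from \cite{Mc50} and \cite{BCS58} and uses it as a black box in the proof of Theorem~\ref{t6}. So there is no ``paper's own proof'' to compare against --- you have simply supplied what the authors chose to cite, and your closing remark that one may instead invoke the references is precisely what they do.
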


To prove that there is no translation invariant Borel hull operation on
$\cN$ (on $\cM$, respectively) it is enough to show that the additive group  
$\mbR^n$ has a subgroup which is a null set of the second category (a meager
set of positive outer measure).

\begin{theorem}
\label{t6}
\begin{enumerate}
\item \label{t6-1}
  If $\mbR^n$ has a subgroup $G\in \cN\setminus \cM$, then there is no
  translation invariant Borel hull operation on $\cN$.
\item \label{t6-2}
  If $\mbR^n$ has a subgroup $H\in \cM\setminus \cN$, then there is no
  translation invariant Borel hull operation on $\cM$.
\end{enumerate}

\begin{proof}
(\ref{t6-1}) 
Suppose, contrary to our claim, that there is a translation invariant hull
operation $\varphi :\cN\longrightarrow \cN\cap\borel$. For every $x$ from
$G$ we have $G+x=G$, which gives $\varphi \left( G\right) +x=\varphi \left(
G+x\right) =\varphi \left( G\right) $, and consequently $\varphi \left(
G\right) +G=\varphi \left( G\right) $. Since $G\notin \cM$ and $\varphi
\left( G\right) \in \borel\setminus \cM$, Theorem \ref{c} implies that
$\varphi \left( G\right) +G$ has an interior point, contrary to $\varphi
\left( G\right) \in \cN$.  The proof of (\ref{t6-2}) is similar.
\end{proof}
\end{theorem}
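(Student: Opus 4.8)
The plan is to argue by contradiction in both cases, exploiting the fact that a subgroup of $\mbR^n$ is invariant under translations by its own elements: a translation invariant Borel hull of such a subgroup inherits this partial invariance and thereby becomes an algebraic ``sum set'' of exactly the shape to which the one-sided Steinhaus property of Theorem~\ref{c} applies.

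For part~(\ref{t6-1}), suppose towards a contradiction that $\varphi:\cN\longrightarrow\borel\cap\cN$ is a translation invariant Borel hull operation, and apply it to the subgroup $G$. For every $x\in G$ we have $G+x=G$, so translation invariance gives $\varphi(G)=\varphi(G+x)=\varphi(G)+x$; since $0\in G$ this says precisely that $\varphi(G)+G=\varphi(G)$. Now $\varphi(G)$ is Borel, hence has the Baire property, and it contains $G\notin\cM$, so $\varphi(G)$ is of the second category. Applying Theorem~\ref{c}(\ref{c1}) with $A=\varphi(G)$ and $B=G$, the set $\varphi(G)+G=\varphi(G)$ has an interior point; but $\varphi(G)\in\cN$ has empty interior, a contradiction.

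Part~(\ref{t6-2}) is the measure-theoretic dual. A translation invariant Borel hull operation $\psi:\cM\longrightarrow\borel\cap\cM$ applied to $H$ yields, by the identical computation, $\psi(H)+H=\psi(H)$; here $\psi(H)$ is Borel, hence Lebesgue measurable, and it is not Lebesgue null since it contains $H\notin\cN$. Theorem~\ref{c}(\ref{c2}), with $A=\psi(H)$ and $B=H$, then forces $\psi(H)$ to have an interior point, contradicting $\psi(H)\in\cM$.

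I expect no serious obstacle once Theorem~\ref{c} is available; the one point that needs attention is noticing that $\varphi(G)$ is invariant under translations by $G$ alone (not by all of $\mbR^n$), which is exactly why it matters that the Steinhaus-type statement demands regularity --- the Baire property, respectively measurability --- of only one of the two summands, namely the Borel hull, while the other summand $G$ (resp.\ $H$) is merely assumed non-meager (resp.\ non-null).
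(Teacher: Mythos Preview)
Your proof is correct and follows essentially the same argument as the paper: assume a translation invariant hull $\varphi$, use the subgroup property $G+x=G$ to get $\varphi(G)+G=\varphi(G)$, and then apply the one-sided Steinhaus property (Theorem~\ref{c}) to reach a contradiction with $\varphi(G)\in\cN$ (respectively $\cM$). You are slightly more explicit than the paper in justifying why $\varphi(G)\notin\cM$ (it contains $G$) and why Theorem~\ref{c} applies (Borel sets have the Baire property and are measurable), but the structure is identical.
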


We will show that under some set-theoretic assumptions one can find a linear
subspace of $\mbR^n$ (considered over the field $\bbQ$ of rational numbers)
which belongs to $\cN\setminus\cM$ ($\cM\setminus\cN$, respectively).

Let us consider inequalities $\non(\cN)>\non(\cM)$ and
$\non(\cM)>\non(\cN)$. From Cicho\'{n}'s diagram it follows that each of
them is independent in ZFC.  

\begin{theorem}[~S. G{\l}\k{a}b]
\label{t7}
\begin{enumerate}
\item \label{t7-1}
  If $\non(\cN)>\non(\cM)$, then there exists a linear subspace of $\mbR^n$
  which belongs to $\cN\setminus\cM$.
\item \label{t7-2}
  If $\non(\cM)>\non(\cN)$, then there exists a linear subspace of $\mbR^n$
  which belongs to $\cM\setminus \cN$.
\end{enumerate}

\begin{proof}
  (\ref{t7-1}) Let $A\subseteq\mbR^n$ be a set such that $\left\vert
    A\right\vert =\non\left(\cM\right) $ and $A\notin\cM$. From $\left\vert
    \ttspan(A)\right\vert =\left\vert A\right\vert $ we obtain $\ttspan(A)\in
  \cN\setminus \cM$. The proof of (\ref{t7-2}) is similar.
\end{proof}
\end{theorem}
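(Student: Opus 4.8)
The plan is to take a minimal-size witness for $\non(\cM)$ (resp.\ $\non(\cN)$) and close it off under rational scalar multiplication and addition, exploiting the fact that forming the rational span does not increase cardinality. For part (\ref{t7-1}), first fix $A\subseteq\mbR^n$ with $|A|=\non(\cM)$ and $A\notin\cM$; such a set exists by the definition of $\non(\cM)$, and it is infinite since every countable subset of $\mbR^n$ is meager (so $\non(\cM)\geq\aleph_1$).

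Next, pass to the linear subspace $V=\ttspan(A)$ of $\mbR^n$ over $\bbQ$ generated by $A$. The one point that requires an argument is that $|V|=|A|$. Indeed, every element of $V$ is a finite $\bbQ$--linear combination of members of $A$, so $V$ is the image of $\bigcup_{k<\omega}(\bbQ^k\times A^k)$ under the evaluation map; since $A$ is infinite this set has cardinality $\aleph_0\cdot|A|=|A|$, hence $|V|\le|A|$, and the reverse inequality is trivial as $A\subseteq V$. This is exactly where the countability of the base field $\bbQ$ enters.

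It remains to see that $V\in\cN\setminus\cM$. On the one hand $A\subseteq V$ and $A\notin\cM$, so $V\notin\cM$ because $\cM$ is an ideal (hence downward closed under inclusion). On the other hand $|V|=|A|=\non(\cM)<\non(\cN)$ by hypothesis, and by the definition of $\non(\cN)$ any subset of $\mbR^n$ of cardinality strictly smaller than $\non(\cN)$ lies in $\cN$; hence $V\in\cN$. Thus $V$ is the required linear subspace.

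Part (\ref{t7-2}) is proved symmetrically: start from $A\subseteq\mbR^n$ with $|A|=\non(\cN)$ and $A\notin\cN$, set $V=\ttspan(A)$, use $|V|=|A|=\non(\cN)<\non(\cM)$ together with the minimality of $\non(\cM)$ to get $V\in\cM$, and $A\subseteq V$ to get $V\notin\cN$. I do not expect any real obstacle here — the entire content is the observation that the rational span is cardinality-preserving on infinite sets, which is what converts the strict inequality between the two uniformity numbers into the desired separation of the ideals.
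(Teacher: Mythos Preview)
Your argument is correct and follows exactly the same route as the paper: choose $A$ of size $\non(\cM)$ not in $\cM$, pass to $\ttspan(A)$, and use $|\ttspan(A)|=|A|<\non(\cN)$. You have merely spelled out the cardinality computation and the downward-closure reasoning that the paper leaves implicit.
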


A {\em $\kappa$--Luzin set\/} for an ideal $\cI$ on $\mbR^n$ is a subset of
$\mbR^n$ which has cardinality $\geq\kappa$, and such that the intersection
with any set from $\cI$ has cardinality less than $\kappa$ (compare
Bukovsk\'{y} \cite[Section 8.2]{Buk11} or Cicho\'{n} \cite{Ci89}). If
$\cI=\cN$, then $\kappa$--Luzin sets for $\cI$ are also called
$\kappa$--Sierpi\'{n}ski sets, and $\kappa$--Luzin sets for the meager ideal
are called just $\kappa$--Luzin. If $\kappa=\aleph_1$ then we may omit it.

Of course, a $\kappa$--Luzin set for $\cI$ does not belong to $\cI$. Note
that if there exists a $\kappa$--Luzin set for $\cI$, then $\non(\cI)\leq
\kappa$ and $\cf(\kappa)\leq\cov(\cI)$. It is known that if
$\cov(\cI)=\cof(\cI)=\kappa $, then there exists a $\kappa $--Luzin set for
$\cI$ (see \cite[Theorem 8.26]{Buk11}).  Since $\mbR^n$ can be decomposed
into a null set and a meager set, it follows that for any regular $\kappa $,
every $\kappa$--Luzin set has measure zero and every
$\kappa$--Sierpi\'{n}ski set is meager.

\begin{theorem}[{\protect\cite[Theorem 8.28]{Buk11}}]
\label{t9}
\begin{enumerate}
\item \label{t9-1}
  If $\kappa \leq\non(\cN)$ and $A$ is a $\kappa$--Luzin set, then $A\in
  \cN\setminus \cM$. In particular, if $\kappa$ is regular and $A$ is a
  $\kappa$--Luzin set, then $A\in\cN\setminus\cM$.
\item \label{t9-2}
  If $\kappa \leq\non(\cM)$ and $A$ is a $\kappa$--Sierpi\'{n}ski set, then
  $A\in\cM\setminus\cN$. In particular, if $\kappa$ is regular and $A$ is a
  $\kappa$--Sierpi\'{n}ski set, then $A\in\cM\setminus\cN$.
\end{enumerate}
\end{theorem}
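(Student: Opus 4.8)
The plan is to prove the two items by a single argument with the roles of $\cN$ and $\cM$ interchanged, relying on only two standard facts already quoted above: that $\mbR^n$ splits as $\mbR^n=N\cup M$ with $N\in\cN$ and $M\in\cM$, and that a set of size strictly below $\non(\cN)$ (respectively $\non(\cM)$) belongs to $\cN$ (respectively $\cM$), which is immediate from the definition of $\non$.

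For~(\ref{t9-1}), let $A$ be a $\kappa$--Luzin set with $\kappa\le\non(\cN)$. First I would note that $A\notin\cM$: otherwise, applying the defining property of a $\kappa$--Luzin set to the meager set $A$ itself gives $|A|=|A\cap A|<\kappa$, contradicting $|A|\ge\kappa$ --- this is just the already-recorded remark that a $\kappa$--Luzin set for an ideal is not a member of that ideal. To see $A\in\cN$, fix a decomposition $\mbR^n=N\cup M$ with $N\in\cN$ and $M\in\cM$, and split $A=(A\cap N)\cup(A\cap M)$. Here $A\cap N\subseteq N\in\cN$, while $A\cap M$ has cardinality $<\kappa\le\non(\cN)$ because $M$ is meager and $A$ is $\kappa$--Luzin, so $A\cap M\in\cN$ by minimality of $\non(\cN)$; since $\cN$ is a $\sigma$--ideal, $A=(A\cap N)\cup(A\cap M)\in\cN$, and therefore $A\in\cN\setminus\cM$. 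Item~(\ref{t9-2}) is proved identically with $\cN$ and $\cM$ swapped: a $\kappa$--Sierpi\'{n}ski set $A$ with $\kappa\le\non(\cM)$ is not null for the same trivial reason, and splitting along $\mbR^n=N\cup M$ gives $A\cap N\in\cM$ (because $|A\cap N|<\kappa\le\non(\cM)$) and $A\cap M\subseteq M\in\cM$, whence $A\in\cM\setminus\cN$.

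It remains to handle the ``in particular'' clauses, where $\kappa$ is only assumed regular. I would reduce these to the cases just treated by showing that the mere existence of a $\kappa$--Luzin (respectively $\kappa$--Sierpi\'{n}ski) set with $\kappa$ regular already forces $\kappa\le\non(\cN)$ (respectively $\kappa\le\non(\cM)$). Indeed, by the observation recalled before the theorem, the existence of a $\kappa$--Luzin set yields $\cf(\kappa)\le\cov(\cM)$, and Cicho\'{n}'s diagram gives $\cov(\cM)\le\non(\cN)$, so $\kappa=\cf(\kappa)\le\non(\cN)$; symmetrically, a $\kappa$--Sierpi\'{n}ski set gives $\cf(\kappa)\le\cov(\cN)\le\non(\cM)$. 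Alternatively one may simply invoke the already-stated fact that, thanks to the decomposition $\mbR^n=N\cup M$, every $\kappa$--Luzin set with $\kappa$ regular is null and every such $\kappa$--Sierpi\'{n}ski set is meager, and combine it with the triviality above that these sets miss the respective ideal.

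I do not expect a genuine obstacle here. The only points requiring care are the cardinal-arithmetic bookkeeping --- in particular keeping straight the two Cicho\'{n} inequalities $\cov(\cM)\le\non(\cN)$ and $\cov(\cN)\le\non(\cM)$, which are exactly what push the regular-$\kappa$ statements through --- and not forgetting to record the trivial ``not in the ideal'' half, which is forced by the cardinality requirement built into the definition of a $\kappa$--Luzin/$\kappa$--Sierpi\'{n}ski set. The only mildly non-elementary input, the splitting $\mbR^n=N\cup M$, is classical and is quoted in the text above.
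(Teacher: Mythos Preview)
Your proof is correct. Note, however, that the paper does not give its own proof of this theorem: it is quoted as \cite[Theorem~8.28]{Buk11} and stated without argument, so there is no in-paper proof to compare against. Your argument---splitting along the classical decomposition $\mbR^n=N\cup M$ with $N\in\cN$, $M\in\cM$, and using $|A\cap M|<\kappa\le\non(\cN)$ (respectively $|A\cap N|<\kappa\le\non(\cM)$)---is exactly the standard one, and your handling of the ``in particular'' clauses via $\cf(\kappa)\le\cov(\cM)\le\non(\cN)$ and $\cf(\kappa)\le\cov(\cN)\le\non(\cM)$ is also correct and matches the observations the paper records just before the theorem.
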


Sm\'{\i}tal proved that if the Continuum Hypothesis holds then there exists
a linear subspace of $\mbR^n$, which is a Luzin set (see \cite{Sm68}). One
can construct a linear subspace which belongs to $\cN\setminus \cM$
($\cM\setminus \cN$) assuming a condition weaker than CH. The proof is a
small modification of the proof of \cite[Lemma 1]{Sm68}.

\begin{theorem}
\label{t8}
Let $\{\cI,\cJ\}=\{\cN,\cM\}$. 
\begin{enumerate}
\item If $\covt(\cI)\geq\cof(\cI)$, then there exists a linear subspace $H$
of $\mbR^n$ such that $H\in \cJ\setminus\cI$. 
\item {\rm [Bukovsk\'{y} \cite[Exercise 8.7(b)]{Buk11}]} If $\cov(\cI)
  =\cof(\cI)=\kappa$, then there exists a linear subspace $H$ of $\mbR^n$,
  which is a $\kappa$--Luzin set for $\cI$. 
\end{enumerate}
\end{theorem}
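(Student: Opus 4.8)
The plan is to construct the linear subspace $H$ by a transfinite induction of length $\kappa = \cof(\cI)$, adding one new vector at a time while keeping $H$ inside the ideal $\cJ$ (the ``other'' ideal) and ensuring that $H$ avoids membership in $\cI$ by making it large in a controlled way. Fix a base $\{B_\alpha : \alpha < \kappa\}$ of $\cI$ consisting of Borel sets, witnessing $\cof(\cI) = \kappa$; we may assume each $B_\alpha$ is increasing in $\alpha$, or at least that every set in $\cI$ is contained in some $B_\alpha$. We will build an increasing chain $\langle H_\alpha : \alpha < \kappa\rangle$ of countable (or size $<\kappa$) $\bbQ$-linear subspaces of $\mbR^n$ and set $H = \bigcup_{\alpha<\kappa} H_\alpha$. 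At stage $\alpha$ we want to pick a vector $x_\alpha$ so that (a) $x_\alpha \notin \ttspan(H_\alpha \cup (\bigcup_{\beta \le \alpha} B_\beta))$ — this uses $\covt(\cI) \ge \cof(\cI)$, see below — and (b) $x_\alpha$ is chosen from a fixed set $E \in \cJ$ with $E \notin \cI$ (for instance, if $\cJ = \cM$ and $\cI = \cN$, take $E$ to be a meager set of full outer measure, so that $\mbR^n \setminus E \in \cN$, hence $\mbR^n\setminus E$ is covered by some $B_\gamma$). Then $H_{\alpha+1} = \ttspan(H_\alpha \cup \{x_\alpha\})$, and at limits take unions.

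The point of requirement (a) is twofold. First, it guarantees that the resulting $H$ is not in $\cI$: if it were, then $H \subseteq B_\gamma$ for some $\gamma < \kappa$, but $x_\gamma \in H \setminus B_\gamma$ by construction, a contradiction. (One should be slightly careful: we need $x_\alpha \notin B_\beta$ for all $\beta$, not just $\beta \le \alpha$; this is why one arranges the $B_\beta$ to be increasing, or simply re-indexes so that each set of the base appears cofinally.) Second, together with (b) it keeps $H$ inside $\cJ$. Indeed, since $\cJ$ is a $\sigma$-ideal and we are adding one vector at a time, $H$ is contained in the $\bbQ$-span of $\{x_\alpha : \alpha < \kappa\} \subseteq E$; but a single vector times countably many rationals, and finite sums thereof, stays in a countable union of translates-and-scalings of $E$... this needs the ideal $\cJ$ to be invariant under the relevant operations. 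In fact $\cM$ and $\cN$ are both invariant under translations and under multiplication by nonzero rationals, and $\ttspan$ of a set $S$ is a countable union of sets of the form $q_1 S + \dots + q_k S$ (finite sums), each of which, when $S \subseteq E \in \cJ$, lies in $\cJ$ provided $E + E \in \cJ$ as well — and this is where one must invoke that $E$ can be chosen so that $\ttspan(E) \in \cJ$, which is exactly the classical Sm\'{\i}tal-type fact that $\cM$ and $\cN$ each contain a $\bbQ$-linear subspace not in the other ideal... wait, that would be circular. The correct move is: choose $E$ itself to be a $\bbQ$-linear subspace in $\cJ \setminus \cI$; but that is the theorem. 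So instead one builds $H$ directly as above, using at each stage only that $E \in \cJ$ and re-examining closure: the cleaner formulation is to pick $x_\alpha$ from $E$ and note $H = \bigcup_\alpha H_\alpha$ where each $H_\alpha$, being a span of fewer than $\kappa$ vectors from $E$, must be shown to stay in $\cJ$ — which we can arrange by shrinking: it suffices that $\ttspan$ of any countable subset of $E$ lies in $\cJ$, and for $\cJ \in \{\cM, \cN\}$ this holds for a suitable choice of $E$ by a standalone argument (a countable-dimensional rational subspace generated by ``generic'' points of $E$).

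The role of $\covt(\cI) \ge \cof(\cI)$ is to make step (a) possible. At stage $\alpha$ the ``forbidden'' set is $\ttspan\big(H_\alpha \cup \bigcup_{\beta\le\alpha} B_\beta\big)$. Now $H_\alpha$ has size $<\kappa$, and $\bigcup_{\beta\le\alpha}B_\beta \in \cI$ (a union of $\le|\alpha|<\kappa \le \cof(\cI) \le \cov(\cI)$... no, we don't know $\kappa \le \cov$; but if the union of $<\kappa$ sets from $\cI$ could be all of $\mbR^n$ we'd have $\cov(\cI)<\kappa$, and then $\cof(\cI) = \kappa$ still but one handles it by a careful bookkeeping so that at stage $\alpha$ only $B_\alpha$, not the whole initial segment, is forbidden — using increasingness of the base). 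The key estimate is: $\ttspan(H_\alpha \cup B)$ for a fixed $B \in \cI$ and a small $H_\alpha$ is a union of $<\kappa$ many sets of the form $v + q B$ and $v + q B + q' B$, i.e. bounded translates of finitely-many-fold sumsets of $B$; each such set is still ``$\cI$-small'' in the relevant sense — actually the clean statement is that $B + B + \dots$ need not be in $\cI$, so one instead argues: if every point of $\mbR^n$ were in $\ttspan(H_\alpha \cup B)$, then in particular $\mbR^n = F + \ttspan(B)$ where $F$ is a set of size $<\kappa$ (coset representatives from the small part), contradicting $\covt(\cI) \ge \cof(\cI) = \kappa$ \emph{once we know $\ttspan(B) \in \cI$} — and here is the genuine content: $\covt$ is defined using an arbitrary $B' \in \cI$ with $A + B' = \mbR^n$, and we would need $\ttspan(B) \in \cI$, which fails in general.

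I expect the main obstacle to be precisely this closure issue: $\cI$ is not closed under taking $\bbQ$-spans, so one cannot naively say ``the forbidden set is in $\cI$.'' The resolution, following Sm\'{\i}tal's original argument, is to not forbid the whole span but to use $\covt$ directly: one shows that for any $B \in \cI$ and any set $F$ of size $<\covt(\cI)$, $F + B \ne \mbR^n$, so there is a point $x$ with $x \notin F + B$; taking $F$ to be $-\,$(the finite set of partial sums/scalings from $H_\alpha$) arranged appropriately, one secures $x_\alpha$ outside the ``one-step'' extension of $H_\alpha$ by $B_\alpha$, and linear independence modulo $H_\alpha$ is arranged separately (only countably/$<\kappa$-many linear constraints). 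Meanwhile membership of the final $H$ in $\cJ$ is the other delicate point, handled by drawing all the $x_\alpha$ from a single $\bbQ$-subspace-generating family inside a $\cJ$-set, so that $H$ is itself a $\bbQ$-subspace contained in a fixed set of $\cJ$ closed under the needed arithmetic — for $\cM$ and $\cN$ such a set is produced by the standard decomposition $\mbR^n = M \cup N$ with $M \in \cM$, $N \in \cN$ and the observation that a rational subspace generated inside $M$ (resp. $N$) can be kept inside a meager (resp. null) set by a further diagonalization. Part (2) is then the special case $\kappa = \cov(\cI) = \cof(\cI)$, where additionally the intersection of $H$ with any member of $\cI$ has size $<\kappa$ — this follows because any $C \in \cI$ is contained in some $B_\alpha$, and $H \cap B_\alpha \subseteq H_{\alpha}$-ish has size $<\kappa$ by construction, giving the $\kappa$-Luzin property for $\cI$.
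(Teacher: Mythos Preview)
Your outline has the right skeleton (transfinite construction of length $\kappa=\cof(\cI)$, use of $\covt$ to find each $x_\alpha$), but there is a genuine gap in how you secure $H\in\cJ$, and your treatment of the forbidden set is still too vague. The paper's argument avoids both difficulties with one clean trick. First, the correct forbidden set at stage $\alpha$ is $Z_\alpha+\bbQ B_\alpha$, where $Z_\alpha=\ttspan(\{x_\beta:\beta<\alpha\})$ and $\bbQ B_\alpha=\{qx:q\in\bbQ,\ x\in B_\alpha\}$. Since $\cN$ and $\cM$ are $\sigma$--ideals invariant under nonzero rational scaling, $\bbQ B_\alpha\in\cI$; and $|Z_\alpha|<\kappa\le\covt(\cI)$, so $Z_\alpha+\bbQ B_\alpha\ne\mbR^n$ and $x_\alpha$ exists. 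This is exactly the ``use $\covt$ with $F=Z_\alpha$'' step you gesture at, but you never identify $\bbQ B_\alpha$ as the single $\cI$--set that makes it work, and instead worry (unnecessarily) about $\ttspan(B_\alpha)$. Also, your concern that one needs $x_\alpha\notin B_\beta$ for \emph{all} $\beta$ is misplaced: $x_\alpha\notin B_\alpha$ already gives $H\not\subseteq B_\alpha$ for every $\alpha$, hence $H\notin\cI$.

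The real missing idea is for $H\in\cJ$. Your plan to draw the $x_\alpha$ from a fixed $E\in\cJ$ and hope that $\ttspan(\{x_\alpha\})$ stays in $\cJ$ does not work: neither $\cM$ nor $\cN$ is closed under spans, and your attempted fixes are circular or hand-wavy. The paper instead fixes, before the induction, a Borel set $B\in\cI$ with $\mbR^n\setminus B\in\cJ$ (the standard decomposition), and enlarges every basis element so that $B\subseteq B_\alpha$. Now the choice $x_\alpha\notin Z_\alpha+\bbQ B_\alpha$ does much more than $x_\alpha\notin B_\alpha$: a leading-term argument shows that \emph{every nonzero element of $H$} avoids $B$. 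Indeed, if $0\ne x=q_1x_{\alpha_1}+\cdots+q_mx_{\alpha_m}\in B$ with $\alpha_1<\cdots<\alpha_m$ and $q_m\ne0$, then $x_{\alpha_m}=\frac{1}{q_m}x+y$ with $y\in Z_{\alpha_m}$ and $\frac{1}{q_m}x\in\bbQ B\subseteq\bbQ B_{\alpha_m}$, contradicting the choice of $x_{\alpha_m}$. Hence $H\setminus\{0\}\subseteq\mbR^n\setminus B\in\cJ$, so $H\in\cJ$. Part~(2) is the same construction with the stronger requirement $x_\alpha\notin\bigcup_{\beta\le\alpha}(\bbQ B_\beta+Z_\alpha)$ (possible since now $\cov(\cI)=\kappa$), and the same leading-term computation yields $H\cap B_\alpha\subseteq Z_\alpha$, giving the $\kappa$--Luzin property.
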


\begin{proof}
(1)\quad Let $\kappa=\cof(\cI)\leq \covt(\cI)$. Let $B\in \cI$ be such that
$\mbR^n\setminus B\in \cJ$ and let $\langle B_\alpha:\alpha< \kappa \rangle$
be a basis for the ideal $\cI$ such that $B\subseteq B_\alpha$ for all
$\alpha<\kappa$. By induction on $\alpha$ we choose a sequence $\langle
x_\alpha:\alpha<\kappa\rangle$. Suppose that $\langle
x_\beta:\beta<\alpha\rangle$ has been defined  and let $Z_\alpha:=
\ttspan(\{x_\beta:\beta<\alpha\})$. The set $\bbQ B_\alpha:=\{qx: x\in
B_\alpha,\ q\in\bbQ\}$ belongs to $\cI$ and $|Z_\alpha|<\covt(\cI)$, so 
we may choose 
\begin{enumerate}
\item[$(*)_\alpha$] $x_\alpha\in \mbR\setminus \bigcup\limits_{y\in
    Z_\alpha} (\bbQ B_\alpha+y)$.  
\end{enumerate}
Then, after the construction is carried out, we set 
\[H:=\ttspan(\{x_\alpha:\alpha<\kappa\}).\]
Since $x_\alpha\notin B_\alpha$, we have $H\notin\cI$. Also,
$H\setminus\{0\} \subseteq \mbR^n\setminus B$. Indeed, suppose towards
contradiction that $x\in H\cap B\setminus\{0\}$ and let $x=q_1x_{\alpha_1}+
\ldots+ q_mx_{\alpha_m}$, where $\alpha_1<\alpha_2< \ldots<\alpha_m<
\kappa$, $q_1,\ldots,q_m\in\bbQ$ and $q_m\neq 0$. Let $y=-\frac{1}{q_m} (q_1 
x_{\alpha_1}+\ldots+q_{m-1} x_{\alpha_{m-1}})$. Plainly, $y\in
Z_{\alpha_m}$. Since $x_{\alpha_m}\notin \bbQ B_{\alpha_m}+y$ and $x\in
B\subseteq B_{\alpha_m}$, we conclude 
\[x_{\alpha_m}\neq \frac{1}{q_m}x-\frac{1}{q_m}(q_1x_{\alpha_1}+
\ldots+q_{m-1}x_{\alpha_{m-1}})=x_{\alpha_m},\]
a contradiction. Now we easily see that $H\in \cJ\setminus\cI$. 
\medskip

(2)\quad The arguments are essentially the same as in (1). We choose a
sequence $\langle x_\alpha:\alpha<\kappa\rangle$ so that 
\begin{enumerate}
\item[$(*)_\alpha^+$] $x_\alpha\in \mbR\setminus \bigcup \big\{\bbQ B_\beta
  +y: \beta\leq \alpha\ \&\ y\in Z_\alpha\big\}$,  
\end{enumerate}
where $Z_\alpha:=\ttspan(\{x_\beta:\beta<\alpha\})$. After the construction
is carried out, we set  $H:=\ttspan(\{x_\alpha:\alpha<\kappa\})$,
and we argue that $\left\vert H\cap B_\alpha\right\vert<\kappa$ for each
$\alpha<\kappa$. Suppose $x\in H\cap B_\alpha$, $x\neq 0$. Then $x=q_1
x_{\alpha_1}+\ldots +q_mx_{\alpha_m}$ for some $q_1,\ldots,q_m\in \bbQ$,
$q_m\neq 0$ and $\alpha_1<\ldots<\alpha_m$. So,
\[y_0:=x_{\alpha_m}-\frac{x}{q_m}=-(\frac{q_1}{q_m}x_{\alpha_1}+ \ldots +
\frac{q_{m-1}}{q_m} x_{\alpha_{m-1}})\in\ttspan(\{x_\beta:\beta<\alpha_m\})
= Z_{\alpha_m}\] 
and 
\[x_{\alpha_m}=\frac{1}{q_m}x+y_0\in \bbQ B_\alpha+Z_{\alpha_m}.\]
Since, by $(*)^+_\alpha$, $x_{\alpha_m}\notin \bbQ B_\beta+Z_{\alpha_m}$ for
$\beta\leq \alpha_m$ we conclude $\alpha>\alpha_m$. Thus $H\cap
B_\alpha\subseteq Z_\alpha$, and consequently $\left\vert H\cap
  B_\alpha\right \vert<\kappa$.  
\end{proof}

\begin{remark}
Concerning the assumptions in Theorem \ref{t8}(1), note that
$\covt(\cN)\leq \non(\cM)\leq \cof(\cN)$, so in the case of the null
ideal the assumption here is actually $\covt(\cN)=\cof(\cN)$. However,
for the meager ideal we can only say that $\covt(\cM)\leq \non(\cN)$
and it is consistent that $\covt(\cM)>\cof(\cM)$. For further
discussion of $\covt(\cM)$ we refer the reader to Bartoszy\'{n}ski and
Judah \cite[Section 2.7]{BaJu95} or Miller and Stepr\={a}ns
\cite{MiSt06}. 

In Theorem \ref{t8}(2) note that $\cov(\cI)\leq \non(\cJ)$ and
therefore the subspace $H$ as there satisfies $H\in \cJ\setminus \cI$ 
(by Theorem \ref{t9}). 

It is well known that for $\cI \in \{\cN,\cM\}$, the Martin Axiom MA implies 
$\add(\cI) =\cov(\cI) =\covt(\cI) =\non(\cI)= \cof(\cI)=2^{\aleph_0}$. 
\end{remark}

The following corollary sums up our previous  considerations.

\begin{corollary}

\begin{enumerate}
\item There is no translation invariant Borel hull operation on $\cN$
if any of the following conditions holds: 

$\non(\cN)>\non(\cM) $ or $\covt(\cM)\geq \cof(\cM)$ or MA. 

\item There is no translation invariant Borel hull operation on $\cM$
if any of the following conditions holds:

$\non(\cM)>\non(\cN) $ or $\covt(\cN)=\cof(\cN)$ or MA. 
\end{enumerate}
\end{corollary}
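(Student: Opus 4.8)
The plan is to assemble this corollary purely as a bookkeeping exercise combining Theorem~\ref{t6} with the sufficient conditions for the existence of the relevant subgroups established in Theorems~\ref{t7}, \ref{t8}, and the remark preceding the corollary. By Theorem~\ref{t6}(\ref{t6-1}), to rule out a translation invariant Borel hull operation on $\cN$ it suffices to produce a subgroup $G\in\cN\setminus\cM$; since $\mbR^n$ viewed over $\bbQ$ is a vector space, any $\bbQ$--linear subspace is in particular an additive subgroup, so it is enough to produce a linear subspace in $\cN\setminus\cM$. Symmetrically, Theorem~\ref{t6}(\ref{t6-2}) reduces the $\cM$ case to finding a linear subspace in $\cM\setminus\cN$.

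For part (1), I would verify the three alternative hypotheses in turn. First, if $\non(\cN)>\non(\cM)$, then Theorem~\ref{t7}(\ref{t7-1}) directly yields a linear subspace in $\cN\setminus\cM$. Second, if $\covt(\cM)\geq\cof(\cM)$, then applying Theorem~\ref{t8}(1) with $\{\cI,\cJ\}=\{\cM,\cN\}$ (so $\cI=\cM$, $\cJ=\cN$) gives a linear subspace $H\in\cN\setminus\cM$. Third, under MA the remark tells us $\covt(\cM)=\cof(\cM)=2^{\aleph_0}$ (in fact all the listed invariants of $\cM$ coincide), so the second case applies and again Theorem~\ref{t8}(1) produces the required subspace. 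In every case Theorem~\ref{t6}(\ref{t6-1}) then finishes the argument. Part (2) is entirely parallel: $\non(\cM)>\non(\cN)$ invokes Theorem~\ref{t7}(\ref{t7-2}); $\covt(\cN)=\cof(\cN)$ invokes Theorem~\ref{t8}(1) with $\cI=\cN$, $\cJ=\cM$; and MA implies $\covt(\cN)=\cof(\cN)=2^{\aleph_0}$ by the remark, reducing to the previous case. Each then feeds into Theorem~\ref{t6}(\ref{t6-2}).

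The only point requiring a moment's care — and the closest thing to an ``obstacle'' — is the bookkeeping about which ideal plays the role of $\cI$ and which of $\cJ$ in Theorem~\ref{t8}(1), since the hypothesis there is stated for the ideal $\cI$ that ends up \emph{not} containing the subspace; one must also recall, as noted in the remark, that for the null ideal $\covt(\cN)\leq\cof(\cN)$ always holds so the hypothesis ``$\covt(\cN)\geq\cof(\cN)$'' is equivalent to equality, which is why part (2) is phrased with an equals sign while part (1) uses $\geq$ (there $\covt(\cM)>\cof(\cM)$ is genuinely possible). Beyond matching these up correctly, the proof is a direct citation chain with no new ideas, so I would write it in just a few lines: for each listed condition, quote the theorem that yields the subgroup, then quote Theorem~\ref{t6}.

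\begin{proof}
(1)\quad By Theorem~\ref{t6}(\ref{t6-1}) it is enough to show that under any
of the three conditions there is a subgroup of $\mbR^n$ belonging to
$\cN\setminus\cM$; a $\bbQ$--linear subspace of $\mbR^n$ is such a subgroup.
If $\non(\cN)>\non(\cM)$, then Theorem~\ref{t7}(\ref{t7-1}) provides a linear
subspace in $\cN\setminus\cM$. If $\covt(\cM)\geq\cof(\cM)$, then
Theorem~\ref{t8}(1) applied with $\cI=\cM$ and $\cJ=\cN$ provides a linear
subspace $H\in\cN\setminus\cM$. Finally, if MA holds then (by the remark
above) $\covt(\cM)=\cof(\cM)=2^{\aleph_0}$, so the previous case applies.

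(2)\quad Symmetrically, by Theorem~\ref{t6}(\ref{t6-2}) it suffices to find a
linear subspace of $\mbR^n$ in $\cM\setminus\cN$. If $\non(\cM)>\non(\cN)$,
use Theorem~\ref{t7}(\ref{t7-2}). If $\covt(\cN)=\cof(\cN)$, then (since
$\covt(\cN)\leq\cof(\cN)$ always) we have $\covt(\cN)\geq\cof(\cN)$, and
Theorem~\ref{t8}(1) with $\cI=\cN$, $\cJ=\cM$ yields a linear subspace
$H\in\cM\setminus\cN$. If MA holds then $\covt(\cN)=\cof(\cN)=2^{\aleph_0}$
by the remark, so again the previous case applies.
\end{proof}
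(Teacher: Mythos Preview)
Your proposal is correct and matches the paper's approach exactly: the paper does not spell out a proof but simply states that the corollary ``sums up our previous considerations,'' meaning precisely the citation chain Theorem~\ref{t6} $\leftarrow$ Theorems~\ref{t7}, \ref{t8}(1), and the Remark that you have written out. Your bookkeeping (which ideal is $\cI$ in Theorem~\ref{t8}(1), and why part~(2) is phrased with equality) is accurate.
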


\section{Monotone Borel hulls operations on {\tt Baire}}
Let us fix a countable base $\cB$ of our Polish space $\bX$. We also require
that $\cB$ is {\em closed under intersections\/} and $\bX\in \cB$. 

\begin{definition}
\label{regularhull}  
A monotone Borel hull operation $\varphi:\cM\longrightarrow {\tt Borel}
\cap \cM$ is {\sf $\cB$--regular\/} whenever  
\[\varphi(A)\cap U\subseteq \varphi(A\cap U)\]
for all $A\in \cM$ and $U\in \cB$. 
\end{definition}

\begin{theorem}
  \label{thmmeager}
The following conditions are equivalent:
\begin{enumerate}
\item[(i)] There is a monotone Borel hull operation on $\cM$ with respect 
to $\cM$.  
\item[(ii)] There is a $\cB$--regular monotone Borel hull operation on 
$\cM$ with respect to $\cM$. 
\item[(iii)] There is a monotone Borel hull operation on ${\tt Baire}$ with 
respect to $\cM$.   
\end{enumerate}
\end{theorem}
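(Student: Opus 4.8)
The plan is to prove the cycle of implications (iii) $\Rightarrow$ (i) $\Rightarrow$ (ii) $\Rightarrow$ (iii), where the only interesting steps are (i) $\Rightarrow$ (ii) and (ii) $\Rightarrow$ (iii); the implication (iii) $\Rightarrow$ (i) is immediate since $\cM\subseteq {\tt Baire}$ and the restriction of a monotone Borel hull on ${\tt Baire}$ (with respect to $\cM$) to $\cM$ is still a monotone Borel hull with respect to $\cM$.

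For (i) $\Rightarrow$ (ii), I would start from an arbitrary monotone Borel hull $\psi:\cM\longrightarrow {\tt Borel}\cap\cM$ and manufacture a $\cB$--regular one by a countable ``closing-off'' procedure. The idea is to define, for $A\in\cM$,
\[
\varphi(A):=\bigcap\Big\{\psi\big(A\cap U_0\cap\ldots\cap U_{k-1}\big)\cup(\bX\setminus U_{k-1}): k<\omega,\ U_0,\ldots,U_{k-1}\in\cB\Big\},
\]
or some variant obtained by iterating $\omega$ times the operation $A\mapsto \bigcap_{U\in\cB}\big(\psi(A)\cap(\psi(A\cap U)\cup(\bX\setminus U))\big)$ and intersecting the stages. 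Each stage stays Borel (countable Boolean combination of Borel sets), stays a superset of $A$ (because $A\subseteq\psi(A)$ and $A\cap U\subseteq \psi(A\cap U)$, while $A\setminus U\subseteq \bX\setminus U$), stays in $\cM$ (a countable intersection of meager Borel sets, one of which is $\psi(A)\in\cM$), and monotonicity is inherited from $\psi$ since all operations used are monotone in $A$. The point of iterating $\omega$ times is exactly to get the fixed-point inclusion $\varphi(A)\cap U\subseteq\varphi(A\cap U)$: if $x\in\varphi(A)\cap U$, then at every stage $x$ survived the factor $\psi(A\cap U)\cup(\bX\setminus U)$, so $x\in\psi(A\cap U)$, and more carefully, $x$ lies in every stage of the construction applied to $A\cap U$ as well, so $x\in\varphi(A\cap U)$. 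Getting the bookkeeping right so that the $\omega$-th stage is genuinely closed under the operation for every $U\in\cB$ simultaneously is the one place that needs care, but it is the standard ``take an $\omega$-chain and intersect'' argument.

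For (ii) $\Rightarrow$ (iii), fix a $\cB$--regular monotone Borel hull $\varphi$ on $\cM$. Given $A\in{\tt Baire}$, write $A=(G\setminus M_1)\cup M_2$ with $G$ open and $M_1,M_2\in\cM$; equivalently there is an open set $G$ with $A\triangle G\in\cM$, and I would normalize by choosing $G$ to be the union of all basic $U\in\cB$ with $U\setminus A\in\cM$ — this $G$ is canonical (depends only on $A$), it is open, and $A\triangle G$ is meager. The natural candidate is
\[
\Psi(A):=\big(G\cup\varphi(A\setminus G)\big)\setminus\big(\varphi(G\setminus A)\cap G\big)\quad\text{or rather}\quad \Psi(A):=\big(G\setminus\varphi(G\setminus A)\big)\cup\varphi(A\setminus G),
\]
which is Borel, contains $A$ (since $A\cap G=G\setminus(G\setminus A)\supseteq G\setminus\varphi(G\setminus A)$ — wait, the inclusion goes the wrong way, so one must instead build $\Psi(A)\supseteq A$ by taking $\Psi(A):=(G\cup\varphi(A\setminus G))\setminus(\varphi(G\setminus A)\setminus A)$, and the $\cB$--regularity is what lets one replace the non-Borel set $\varphi(G\setminus A)\setminus A$ by the Borel set $\varphi(G\setminus A)\setminus A'$ for a suitable open $A'\subseteq A$), and whose symmetric difference with $A$ is contained in $\varphi(A\setminus G)\cup\varphi(G\setminus A)\in\cM$. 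Here $\cB$--regularity enters precisely to control $\varphi(G\setminus A)$ locally inside $G$: since $G=\bigcup\{U\in\cB: U\setminus A\in\cM\}$, on each such $U$ we have $\varphi(G\setminus A)\cap U\subseteq\varphi((G\setminus A)\cap U)=\varphi(U\setminus A)$, and $U\setminus A$ is meager, so we can peel off an open piece. The hard part — and the reason $\cB$--regularity is exactly the right strengthening — is verifying monotonicity of $\Psi$: if $A_1\subseteq A_2$ are in ${\tt Baire}$ with canonical open sets $G_1\subseteq G_2$, one needs $G_1\setminus\varphi(G_1\setminus A_1)\subseteq G_2\setminus\varphi(G_2\setminus A_2)$ on the overlap and $\varphi(A_1\setminus G_1)\subseteq\varphi(A_2\setminus G_2)$ — the latter fails naively because $A_1\setminus G_1$ need not be a subset of $A_2\setminus G_2$ (points of $A_1$ outside $G_1$ may land in $G_2$). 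Resolving this requires writing $A_i\setminus G_i$ and $G_i\setminus A_i$ more carefully as meager sets that do nest, using that $G_2\setminus G_1\subseteq G_2$ is open and intersecting with the regularity condition; I expect this monotonicity verification, together with checking that the final set is genuinely Borel after the local open-set surgery, to be the main obstacle, and I would organize it as two separate claims.
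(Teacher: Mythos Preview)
Your implication (iii) $\Rightarrow$ (i) is fine, and your (i) $\Rightarrow$ (ii) would work, but the $\omega$--iteration is unnecessary: since $\cB$ is closed under finite intersections, already the single step
\[
\psi(A):=\bigcap_{U\in\cB}\big((\bX\setminus U)\cup\varphi(A\cap U)\big)
\]
is $\cB$--regular. Indeed, given $U,V\in\cB$, set $W=U\cap V\in\cB$; the $W$--factor of $\psi(A)$ intersected with $U$ is contained in $(\bX\setminus V)\cup\varphi((A\cap U)\cap V)$, and since this holds for every $V\in\cB$ one gets $\psi(A)\cap U\subseteq\psi(A\cap U)$. (Your first displayed formula in fact collapses to this one once you use closure of $\cB$ under intersections.)

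The real gap is in (ii) $\Rightarrow$ (iii). Your canonical open set $G=G(A)=\bigcup\{U\in\cB:U\setminus A\in\cM\}$ is the ``open interior of $A$ modulo $\cM$'', and $\Psi(A)=G\cup\varphi(A\setminus G)$ is indeed a Borel hull. But the monotonicity check breaks exactly where you flag it: if $A_1\subseteq A_2$ and $x\in\varphi(A_1\setminus G_1)\setminus G_2$, to invoke $\cB$--regularity you would need a basic $U\ni x$ with $(A_1\setminus G_1)\cap U\subseteq A_2\setminus G_2$, hence with $U\cap G_2=\emptyset$ --- but $\bX\setminus G_2$ is \emph{closed} and need not contain any nonempty $U\in\cB$ around $x$. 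Your suggested repairs (subtracting $\varphi(G\setminus A)$, or replacing pieces by ``suitable open $A'$'') do not address this, and you do not carry them out.

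The paper's fix is to replace the open interior by the closed \emph{support}: set $\bK(Z)=\bX\setminus\bigcup\{U\in\cB:U\cap Z\in\cM\}$ and define $\psi(Z)=\bK(Z)\cup\varphi(Z\setminus\bK(Z))$. Then $\bK$ is monotone, $Z\setminus\bK(Z)\in\cM$, and for $Z\in{\tt Baire}$ also $\bK(Z)\setminus Z\in\cM$, so $\psi$ is a Borel hull. For monotonicity, if $Z\subseteq Y$ and $x\in\psi(Z)\setminus\bK(Y)$, the set $\bX\setminus\bK(Y)$ is \emph{open}, so there is $U\in\cB$ with $x\in U\subseteq\bX\setminus\bK(Y)$; then $(Z\setminus\bK(Z))\cap U\subseteq Y\setminus\bK(Y)$ and $\cB$--regularity gives
\[
x\in\varphi(Z\setminus\bK(Z))\cap U\subseteq\varphi\big((Z\setminus\bK(Z))\cap U\big)\subseteq\varphi(Y\setminus\bK(Y))\subseteq\psi(Y).
\]
The point is that using a closed kernel makes its complement open, which is precisely what $\cB$--regularity needs; with your open $G$ the complement is closed and the argument cannot start.
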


\begin{proof}
(i) $\Rightarrow$ (ii)\quad Let $\varphi:\cM\longrightarrow {\tt
  Borel}\cap \cM$ be a monotone Borel hull operation on $\cM$. For
$A\in \cM$ let 
\[\psi(A)=\bigcap\big\{(\bX\setminus U)\cup \varphi(A\cap U): U\in \cB
\big\}.\] 
Plainly, $\psi(A)$ is a Borel subset of $\bX$ (as a countable intersection of
Borel sets) and $\psi(A)\subseteq\varphi(A)$ (as $\bX\in \cB$). Also, if
$A\subseteq B\in \cM$ and $U\in \cB$, then $\varphi(A\cap U) \subseteq
\varphi(B\cap U)$ (as $\varphi$ is monotone) and hence $(\bX\setminus U)\cup
\varphi(A\cap U) \subseteq (\bX\setminus U)\cup\varphi(B\cap
U)$. Consequently, if $A\subseteq B\in \cM$, then $\psi(A)\subseteq
\psi(B)$. Therefore $\psi:\cM\longrightarrow {\tt Borel}\cap
\cM$ is a monotone Borel hull on $\cM$. To show that it is
$\cB$--regular suppose $A\in \cM$ and $U\in \cB$. 

Fix $V\in\cB$ for a moment and let $W=U\cap V$. Then $W\in \cB$ and 
\[\begin{array}{r}
\psi(A)\cap U\subseteq \big((\bX\setminus W)\cup \varphi(A\cap W)\big) \cap
U=\big((\bX\setminus (U\cap V))\cup \varphi(A\cap U\cap V)\big) \cap U
\subseteq\\
(U\setminus V)\cup \varphi(A\cap U\cap V)\subseteq 
(\bX\setminus V)\cup \varphi((A\cap U)\cap V).\ 
\end{array}\]
Thus $\psi(A)\cap U\subseteq (\bX\setminus V)\cup \varphi((A\cap U)\cap V)$
for all $V\in \cB$ and therefore $\psi(A)\cap U\subseteq \psi(A\cap U)$. 
\medskip

\noindent (ii) $\Rightarrow$ (iii)\quad Suppose that  $\varphi:\cM\longrightarrow
{\tt Borel}\cap \cM$ is a $\cB$--regular monotone Borel hull operation
on
$\cM$. For a set $Z\subseteq \bX$ let 
\[\bK(Z)=\bX\setminus\bigcup\big\{U\in\cB:U\cap Z\in\cM\big\}.\] 
Clearly, $\bK(Z)$ is a closed subset of $\bX$ and
\begin{enumerate}
\item[$(*)_1$] $Z\subseteq Y\subseteq\bX$ implies $\bK(Z)\subseteq \bK(Y)$,   
\item[$(*)_2$] $Z\setminus \bK(Z)\in \cM$, and 
\item[$(*)_3$] if $Z\subseteq \bX$ has the Baire property, then
  $\bK(Z)\setminus Z\in \cM$.
\end{enumerate}
For $Z\in {\tt Baire}$ let 
\[\psi(Z)=\bK(Z)\cup \varphi(Z\setminus\bK(Z)).\] 
Plainly, $\psi:{\tt Baire}\longrightarrow {\tt Borel}$ and for
$Z\in {\tt Baire}$:
\begin{enumerate}
\item[$(*)_4$] $Z\subseteq \bK(Z)\cup (Z\setminus \bK(Z))\subseteq
  \bK(Z)\cup \varphi(Z\setminus \bK(Z))=\psi(Z)$, and  
\item[$(*)_5$] $\psi(Z)\setminus Z\subseteq \big(\bK(Z)\setminus Z\big) \cup
  \varphi(Z\setminus \bK(Z))\in\cM$. 
\end{enumerate}
Thus $\psi$ is a Borel hull operation on ${\tt Baire}$. Let us argue
that $\psi$ is monotone, i.e., 
\begin{enumerate}
\item[$(*)_6$] if $Z\subseteq Y\subseteq \bX$, $Z,Y\in {\tt Baire}$,
  then $\psi(Z)\subseteq \psi(Y)$.  
\end{enumerate}
So suppose $Z\subseteq Y$ have the Baire property and $x\in \psi(Z)$. If
$x\in \bK(Y)$, then $x\in \psi(Y)$ by the definition of $\psi$. Thus assume
also that $x\notin \bK(Y)$. Hence $x\notin \bK(Z)$ (remember $(*)_1$) so
$x\in \varphi(Z\setminus \bK(Z))$. Let $U\in\cB$ be such that $x\in
U\subseteq \bX\setminus \bK(Y)$. Then $(Z\setminus \bK(Z))\cap U\subseteq
Y\setminus \bK(Y)$ and, since $\varphi$ is $\cB$--regular, 
\[x\in \varphi(Z\setminus\bK(Z))\cap U\subseteq \varphi((Z\setminus \bK(Z))
\cap U)\subseteq \varphi(Y\setminus \bK(Y))\subseteq \psi(Y).\] 
\medskip

\noindent (iii) $\Rightarrow$ (i)\quad Straightforward.
\end{proof}

The proof of Theorem \ref{thmmeager} also shows the following.

\begin{corollary}
\label{ourcor}
If there is a monotone $\Sigma^0_\xi$ ($\Pi^0_\xi$,  respectively) hull 
operation on $\cM$ with respect to $\cM$, $2\leq \xi<\omega_1$, then there 
exists a monotone $\Pi^0_{\xi+1}$ ($\Pi^0_\xi$, respectively) hull operation 
on {\tt Baire} with respect to $\cM$.   
\end{corollary}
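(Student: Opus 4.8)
The plan is to revisit the proof of Theorem~\ref{thmmeager} and track the Borel complexity of the hulls produced at each stage, since the three constructions in that proof only ever take countable intersections, countable unions, and complements with respect to members of~$\cB$ (which are open, hence $\Sigma^0_1$, so their complements are closed). First I would examine the step (i)~$\Rightarrow$~(ii): there $\psi(A)=\bigcap\{(\bX\setminus U)\cup\varphi(A\cap U):U\in\cB\}$. Each $\bX\setminus U$ is closed, i.e.\ $\Pi^0_1\subseteq\Pi^0_\xi$ for $\xi\ge 2$; so if $\varphi$ has range in $\Sigma^0_\xi$ (resp.\ $\Pi^0_\xi$), then $(\bX\setminus U)\cup\varphi(A\cap U)$ is a union of a closed set and a $\Sigma^0_\xi$ (resp.\ $\Pi^0_\xi$) set, hence is $\Sigma^0_\xi$ (resp.\ $\Pi^0_\xi$) for $\xi\ge 2$, and the countable intersection over $U\in\cB$ lands in $\Pi^0_{\xi+1}$ (resp.\ $\Pi^0_\xi$, since $\Pi^0_\xi$ is closed under countable intersections). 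Thus the $\cB$--regular monotone hull obtained in clause~(ii) is $\Pi^0_{\xi+1}$ in the $\Sigma^0_\xi$ case and $\Pi^0_\xi$ in the $\Pi^0_\xi$ case.

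Next I would check that the passage (ii)~$\Rightarrow$~(iii) does not raise the complexity any further. There $\psi(Z)=\bK(Z)\cup\varphi(Z\setminus\bK(Z))$, where $\bK(Z)$ is closed by construction. A closed set is $\Pi^0_1\subseteq\Pi^0_\xi\subseteq\Pi^0_{\xi+1}$, and the union of a closed set with a $\Pi^0_{\xi+1}$ (resp.\ $\Pi^0_\xi$) set is again $\Pi^0_{\xi+1}$ (resp.\ $\Pi^0_\xi$), for $\xi\ge 2$; here we use that $\varphi$ applied to the meager set $Z\setminus\bK(Z)$ inherits the class guaranteed by clause~(ii). Hence starting from a monotone $\Sigma^0_\xi$ hull operation on $\cM$ we obtain, after composing the two steps, a monotone $\Pi^0_{\xi+1}$ hull operation on {\tt Baire}, and starting from a monotone $\Pi^0_\xi$ hull operation on $\cM$ we obtain a monotone $\Pi^0_\xi$ hull operation on {\tt Baire}. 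The monotonicity and the hull property itself are exactly as verified in the proof of Theorem~\ref{thmmeager}; only the class bookkeeping is new.

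The one point requiring a little care—and the place I would regard as the main obstacle, modest as it is—is the very first step of the composition: we must feed $\varphi$ a meager set of the form $A\cap U$ (in step (i)$\Rightarrow$(ii)) or $Z\setminus\bK(Z)$ (in step (ii)$\Rightarrow$(iii)), and for the resulting operation to be defined and monotone we only need $\varphi$ to be a monotone hull operation on all of $\cM$, which is given by hypothesis. So no strengthening of the domain of $\varphi$ is needed. It remains only to note the arithmetic of Borel classes in the two cases: $\Sigma^0_\xi$ gets bumped to $\Pi^0_{\xi+1}$ because of the countable intersection in the definition of $\psi$ in (i)$\Rightarrow$(ii) (a countable intersection of $\Sigma^0_\xi$ sets need not be $\Sigma^0_\xi$, but is always $\Pi^0_{\xi+1}$), while $\Pi^0_\xi$ is already closed under countable intersections (and under adjoining a single closed set, for $\xi\ge 2$), so it is preserved. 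This yields precisely the statement of Corollary~\ref{ourcor}.
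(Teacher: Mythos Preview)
Your proposal is correct and follows precisely the approach indicated by the paper, which simply states that the proof of Theorem~\ref{thmmeager} also yields this corollary. Your complexity bookkeeping through the two steps (i)$\Rightarrow$(ii) and (ii)$\Rightarrow$(iii) is accurate, including the observation that the countable intersection in the first step is what forces the bump from $\Sigma^0_\xi$ to $\Pi^0_{\xi+1}$ while leaving $\Pi^0_\xi$ unchanged.
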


Assuming CH, or more generally ${\rm add}(\cM)={\rm cof}(\cM)$, the 
$\sigma$--ideal of meager sets has a monotone $\Sigma^0_2$ hull operation 
and hence, under the same assumption, there is a monotone $\Pi^0_3$ hull 
operation on {\tt Baire}. Other situations with such hulls were provided 
in \cite{RoSh:972}.

\begin{definition}
[See {\cite[Definition 3.4]{RoSh:972}}]
\label{mhg}
Let $\cI$ be an ideal of subsets of $\bX$ and let $\alpha^*,\beta^*$ be
limit ordinals. {\em An $\alpha^*\times\beta^*$--base for $\cI$} is a
sequence $\langle B_{\alpha,\beta}:\alpha<\alpha^*\ \&\
\beta<\beta^*\rangle$ of Borel sets from $\cI$ such that 
\begin{enumerate}
\item[(a)] $\cB$ is a basis for $\cI$, i.e., $(\forall A\in\cI)(\exists B\in
  \cB)(A\subseteq B)$, and
\item[(b)] for each $\alpha_0,\alpha_1<\alpha^*$, $\beta_0,\beta_1<\beta^*$
  we have
\[B_{\alpha_0,\beta_0}\subseteq B_{\alpha_1,\beta_1}\quad \Leftrightarrow\quad
\alpha_0\leq \alpha_1\ \&\ \beta_0\leq\beta_1.\]
\end{enumerate}
\end{definition}

Note that if an ideal $\cI$ has an $\alpha^*\times\beta^*$--base, then 
${\rm add}(\cI)=\min\{{\rm cf}(\alpha^*),{\rm cf}(\beta^*)\}$
and ${\rm cof}(\cI)=\max\{{\rm cf}(\alpha^*),{\rm cf}(\beta^*)\}$.

\begin{theorem}
[See {\cite[Proposition 3.6]{RoSh:972}}]
Assume that $\alpha^*,\beta^*$ are limit ordinals. If an ideal $\cI$ has an 
$\alpha^*\times\beta^*$--base consisting of $\Pi^0_\xi$ sets, $\xi<\omega_1$,
then there exists a monotone $\Pi^0_\xi$ hull operation on $\cI$ with respect
to $\cI$.
\end{theorem}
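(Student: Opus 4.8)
The plan is to construct, directly from an $\alpha^*\times\beta^*$--base $\langle B_{\alpha,\beta}:\alpha<\alpha^*\ \&\ \beta<\beta^*\rangle$ of $\Pi^0_\xi$ sets, a monotone map $\psi:\cI\longrightarrow\borel$ with $A\subseteq\psi(A)$ and $\psi(A)\setminus A\in\cI$, whose values are again $\Pi^0_\xi$. For each $A\in\cI$ let
\[I(A)=\big\{(\alpha,\beta):\alpha<\alpha^*,\ \beta<\beta^*,\ A\subseteq B_{\alpha,\beta}\big\}.\]
By condition (a) the set $I(A)$ is non-empty, and by condition (b) it is an ``upward closed rectangle'', i.e.\ if $(\alpha_0,\beta_0)\in I(A)$ and $\alpha_0\leq\alpha_1$, $\beta_0\leq\beta_1$ then $(\alpha_1,\beta_1)\in I(A)$; moreover $A\subseteq B \in \cI$ implies $I(B)\subseteq I(A)$. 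The natural candidate is
\[\psi(A)=\bigcap\big\{B_{\alpha,\beta}:(\alpha,\beta)\in I(A)\big\}.\]

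First I would verify the three defining properties. Monotonicity is immediate: $A\subseteq B$ gives $I(B)\subseteq I(A)$, hence the intersection defining $\psi(B)$ is over a smaller index set, so $\psi(A)\subseteq\psi(B)$. Also $A\subseteq\psi(A)$ trivially, since $A\subseteq B_{\alpha,\beta}$ for every $(\alpha,\beta)\in I(A)$. The real work is to show $\psi(A)\in\cI$ and that $\psi(A)$ is $\Pi^0_\xi$; note that once $\psi(A)\in\cI$ we automatically get $\psi(A)\setminus A\in\cI$. For the $\Pi^0_\xi$ claim the point is that, although $I(A)$ may be a large (size up to $\cof(\cI)$) index set, the monotone rectangular structure lets us pass to a small ``corner''. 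Precisely, set $a(A)=\min\{\alpha:(\exists\beta)((\alpha,\beta)\in I(A))\}$ and $b(A)=\min\{\beta:(\exists\alpha)((\alpha,\beta)\in I(A))\}$; these minima exist, and I would argue (using the rectangular shape of $I(A)$ together with (b)) that $(a(A),b(A))\in I(A)$ and in fact $\psi(A)=B_{a(A),b(A)}$ — or, if the rectangle is ``open'' at a corner with no minimum in one coordinate because of a limit of cofinality $\omega$, that $\psi(A)$ is a countable intersection $\bigcap_{n<\omega}B_{\alpha_n,\beta_n}$ of base sets along a single descending sequence approaching that corner. Either way $\psi(A)$ is a base set or a countable intersection of base sets, hence $\Pi^0_\xi$ (a countable intersection of $\Pi^0_\xi$ sets is $\Pi^0_\xi$ for $\xi\geq 1$), and it lies in $\cI$ because it is a subset of any one $B_{\alpha,\beta}$ from $I(A)$.

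The main obstacle, and where I would spend the most care, is exactly this reduction of the possibly large intersection to a small cofinal one while simultaneously keeping monotonicity intact. The subtlety is that the ``corner'' $(a(A),b(A))$ genuinely depends on $A$, and for $A\subseteq B$ the corner of $B$ can move inward in one coordinate and outward in the other relative to the corner of $A$ — so one cannot simply say $\psi(A)=B_{a(A),b(A)}$ and be done, one must check that the chosen representation is consistent with the inclusions forced by (b). I would handle this by phrasing everything in terms of the sets $I(A)$ and proving the inclusion $\psi(A)\subseteq\psi(B)$ directly from $I(B)\subseteq I(A)$ \emph{before} extracting any corner, and only afterwards invoke the corner analysis purely to compute the Borel complexity of a fixed $\psi(A)$. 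A secondary technical point is the case analysis on $\cf(\alpha^*)$ and $\cf(\beta^*)$: when a coordinate's relevant infimum has uncountable cofinality the infimum is attained, giving a single base set; when it has cofinality $\omega$ one gets a countable intersection; and one should check the mixed cases. None of this requires more than the monotone base axioms (a), (b) and the closure of $\Pi^0_\xi$ under countable intersections, so the argument is ``essentially bookkeeping'' once the corner lemma is isolated and stated cleanly.
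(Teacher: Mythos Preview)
The paper does not prove this theorem; it is quoted from \cite{RoSh:972}. So there is no in-paper argument to compare against, and I evaluate your proposal on its own.

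Your overall strategy---set $\psi(A)=\bigcap\{B_{\alpha,\beta}:(\alpha,\beta)\in I(A)\}$ and read off monotonicity and the hull properties from the inclusion-reversal $I(B)\subseteq I(A)$---is the right one, and the parts you label ``immediate'' really are. The gap is in your ``corner lemma.'' The set $I(A)$ is \emph{upward closed} in $\alpha^*\times\beta^*$, but it is \emph{not} a rectangle: from $A\subseteq B_{\alpha_0,\beta_0}$ and $A\subseteq B_{\alpha_1,\beta_1}$ with $\alpha_0<\alpha_1$ and $\beta_0>\beta_1$ you cannot conclude $A\subseteq B_{\alpha_0,\beta_1}$, since condition~(b) only gives $B_{\alpha_0,\beta_1}\subseteq B_{\alpha_0,\beta_0}\cap B_{\alpha_1,\beta_1}$, not equality. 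So in general $(a(A),b(A))\notin I(A)$, and your formula $\psi(A)=B_{a(A),b(A)}$ fails. The cofinality case-split that follows is also off target: minima of nonempty sets of ordinals always exist, so there are no ``open corners'' of the kind you describe.

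The correct structural fact is that an upward-closed subset of a product of two well-orders has only \emph{finitely many} minimal elements: an infinite antichain $\{(\alpha_n,\beta_n):n<\omega\}$ would, after passing to a subsequence with $\alpha_n$ strictly increasing, force $\beta_n$ to be strictly decreasing, contradicting well-foundedness. Hence $I(A)$ has a finite ``staircase'' of minimal pairs $(\delta_0,\gamma_0),\ldots,(\delta_k,\gamma_k)$, and since every $(\alpha,\beta)\in I(A)$ dominates one of them, condition~(b) gives
\[
\psi(A)=\bigcap_{(\alpha,\beta)\in I(A)}B_{\alpha,\beta}=\bigcap_{j\leq k}B_{\delta_j,\gamma_j},
\]
a \emph{finite} intersection of $\Pi^0_\xi$ sets from $\cI$. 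This simultaneously yields $\psi(A)\in\Pi^0_\xi$ and $\psi(A)\in\cI$, and the rest of your argument goes through unchanged.
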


\begin{theorem}
[See {\cite[Theorem 3.7]{RoSh:972}}]
\label{modelwithmhg}
Let $\kappa,\lambda$ be cardinals of uncountable cofinality, $\kappa\leq
\lambda$. There is a ccc forcing notion $\bbQ^{\kappa,\lambda}$ of size
$\lambda^{\aleph_0}$ such that
\[\forces_{\bbQ^{\kappa,\lambda}}\mbox{`` the meager ideal $\cM$ has a
  $\kappa\times\lambda$--basis consisting of $\Sigma^0_2$ sets ''.}
\] 
\end{theorem}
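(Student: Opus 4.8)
The plan is to work inside $\can$ — one may assume $\bX$ is perfect, and then its category base is isomorphic to that of $\can$, so this loses nothing — and to assemble the desired base from $\Sigma^0_2$ meager sets in standard Bartoszy\'{n}ski form. Recall that every meager subset of $\can$ is contained in a set
\[M[\bar I,\bar\sigma]:=\{x\in\can:(\forall^\infty n)(x\rest I_n\neq\sigma_n)\},\]
where $\bar I=\langle I_n:n<\omega\rangle$ is an interval partition of $\omega$ and $\sigma_n\in{}^{I_n}2$; each $M[\bar I,\bar\sigma]$ is a $\Sigma^0_2$ meager set, so these sets form a $\Sigma^0_2$ basis of $\cM$. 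The inclusion $M[\bar I_0,\bar\sigma_0]\subseteq M[\bar I_1,\bar\sigma_1]$ is governed by an explicit combinatorial relation between the parameters, and $\bbQ^{\kappa,\lambda}$ will be built so as to force a $\kappa\times\lambda$--indexed family of such parameters whose inclusion pattern is exactly the product order on $\kappa\times\lambda$ and which is still cofinal in $\cM$.

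Concretely, a condition of $\bbQ^{\kappa,\lambda}$ should be a finite object carrying: a finite set $u\subseteq\kappa\times\lambda$ of active coordinates; for each $(\alpha,\beta)\in u$ a finite approximation to the parameter defining $B_{\alpha,\beta}$; and finitely many dual side conditions of the forms ``$z\in B_{\alpha,\beta}$'' and ``$z\notin B_{\alpha,\beta}$'' for basic data $z$ — all mutually consistent and respecting the grid order, so that the committed data already force $B_{\alpha_0,\beta_0}\subseteq B_{\alpha_1,\beta_1}$ whenever $(\alpha_0,\beta_0)\le(\alpha_1,\beta_1)$ and never force any other inclusion. Conditions are ordered by refinement of all of this data. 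Two routine checks come next: (i) $\bbQ^{\kappa,\lambda}$ has the ccc, by a $\Delta$--system argument on the finite sets $u$, two conditions with a common root being compatible because the still-undetermined tails of the parameter systems can be amalgamated while keeping all monotonicity and side-condition constraints satisfiable; and (ii) $|\bbQ^{\kappa,\lambda}|=\lambda^{\aleph_0}$, by directly counting finite conditions.

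Then, in $V^{\bbQ^{\kappa,\lambda}}$, I would show that the generic family $\langle B_{\alpha,\beta}:\alpha<\kappa\ \&\ \beta<\lambda\rangle$ is a $\kappa\times\lambda$--base of $\cM$. For clause (b) of Definition~\ref{mhg}, the implication ``$\Leftarrow$'' is immediate since every condition respects the grid order, while for ``$\Rightarrow$'', when $(\alpha_0,\beta_0)\not\le(\alpha_1,\beta_1)$ no condition ever forces $B_{\alpha_0,\beta_0}\subseteq B_{\alpha_1,\beta_1}$, so it is dense to add a point to $B_{\alpha_0,\beta_0}\setminus B_{\alpha_1,\beta_1}$ and genericity destroys that inclusion. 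For clause (a), cofinality: given a $\bbQ^{\kappa,\lambda}$--name $\dot N$ for a meager Borel set, the ccc yields a nice name depending on only countably many coordinates, so — using $\cf(\lambda)>\omega$, together with a bookkeeping of length $\lambda$ enumerating all such names if convenient — there are coordinates $(\alpha,\beta)$ still ``free'' relative to $\dot N$, and a density argument lets $B_{\alpha,\beta}$ be steered to contain a Bartoszy\'{n}ski cover of the realization of $\dot N$ without violating any previously imposed inclusion; hence $\forces_{\bbQ^{\kappa,\lambda}}$``$\dot N\subseteq B_{\alpha,\beta}$ for some $\alpha,\beta$''. (The values $\add(\cM)=\cf(\kappa)$ and $\cof(\cM)=\cf(\lambda)$ then follow from the computation noted after Definition~\ref{mhg}, though they are not part of the statement.)

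The step I expect to be the main obstacle is reconciling cofinality with clause (b): absorbing a new meager set introduces new inclusions $B_{\alpha,\beta}\supseteq(\text{cover of }\dot N)$, and the forcing and its bookkeeping must be arranged so that such absorption always occurs at coordinates that are free in the right direction, so that it never clashes with the strictness that clause (b) demands elsewhere — all while keeping the ccc. Making the exact definition of a condition, the amalgamation lemma behind the ccc, and the density lemma behind cofinality mesh together is where the genuine work lies.
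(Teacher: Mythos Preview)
The paper does not prove this theorem: it is quoted verbatim from \cite[Theorem~3.7]{RoSh:972} and no argument is given here, so there is no ``paper's own proof'' to compare your proposal against. Your outline --- working in $\can$, using the Bartoszy\'nski parametrization $M[\bar I,\bar\sigma]$ of a $\Sigma^0_2$ basis for $\cM$, and designing a finite-support ccc poset that generically builds a $\kappa\times\lambda$--grid of such sets --- is a reasonable plan and is in the spirit of the constructions in \cite{RoSh:972}, but what you have written is a strategy rather than a proof: the precise definition of a condition, the amalgamation lemma yielding ccc, and especially the density argument reconciling cofinality with the strictness in clause~(b) of Definition~\ref{mhg} are exactly the substantive content, and you have (accurately) flagged them as still to be done. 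If you want to complete this, consult \cite{RoSh:972} directly; the present paper only cites the result.
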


Putting the results quoted above together with Corollary \ref{ourcor}
we obtain the following.

\begin{corollary}
\label{corabove}
Let $\kappa,\lambda$ be uncountable regular cardinals, $\kappa\leq
\lambda$. There is a ccc forcing notion $\bbQ^{\kappa,\lambda}$ of size
$\lambda^{\aleph_0}$ such that   
\[\begin{array}{r}
\forces_{\bbQ^{\kappa,\lambda}}\mbox{`` there is a monotone $\Pi^0_3$ 
hull operation on {\tt Baire} and}\\
{\rm add}(\cM)=\kappa \mbox{ and }{\rm cof}(\cM)=\lambda\mbox{ ''.}
\end{array}\]
\end{corollary}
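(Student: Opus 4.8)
The plan is to assemble Corollary \ref{corabove} by chaining together the three results just quoted, with Corollary \ref{ourcor} serving as the bridge. Fix uncountable regular cardinals $\kappa\leq\lambda$ and let $\bbQ^{\kappa,\lambda}$ be the ccc forcing notion supplied by Theorem \ref{modelwithmhg}; this forcing has size $\lambda^{\aleph_0}$, exactly as demanded. We then work in the generic extension $V^{\bbQ^{\kappa,\lambda}}$ and verify the two conjuncts of the forced statement.

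First I would extract, in the extension, a $\kappa\times\lambda$--base $\langle B_{\alpha,\beta}:\alpha<\kappa\ \&\ \beta<\lambda\rangle$ for $\cM$ consisting of $\Sigma^0_2$ sets, as guaranteed by Theorem \ref{modelwithmhg} (here I use that $\kappa,\lambda$, being regular, are in particular limit ordinals, so the notion of $\kappa\times\lambda$--base from Definition \ref{mhg} applies). By the remark immediately following Definition \ref{mhg}, the existence of such a base pins down $\add(\cM)=\min\{\cf(\kappa),\cf(\lambda)\}=\kappa$ and $\cof(\cM)=\max\{\cf(\kappa),\cf(\lambda)\}=\lambda$, using $\kappa\leq\lambda$ and that both are regular. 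That settles the cardinal-invariant clauses of the corollary. Next, Theorem \ref{modelwithmhg}'s base feeds directly into the quoted Proposition 3.6 of \cite{RoSh:972}: an ideal with an $\alpha^*\times\beta^*$--base consisting of $\Pi^0_\xi$ sets carries a monotone $\Pi^0_\xi$ hull operation on itself. Since $\Sigma^0_2\subseteq\Pi^0_3$, the $\Sigma^0_2$-base is in particular a $\Pi^0_3$-base, so $\cM$ has a monotone $\Pi^0_3$ hull operation with respect to $\cM$ in the extension. (Alternatively, and more sharply, one gets a monotone $\Sigma^0_2$ hull operation directly, but $\Pi^0_3$ is all we need downstream.)

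The last step is to transport this hull operation from $\cM$ to ${\tt Baire}$ using Corollary \ref{ourcor}. Working still inside $V^{\bbQ^{\kappa,\lambda}}$: from a monotone $\Sigma^0_2$ hull operation on $\cM$ with respect to $\cM$, Corollary \ref{ourcor} (the $\Sigma^0_\xi$ case with $\xi=2$) yields a monotone $\Pi^0_3$ hull operation on ${\tt Baire}$ with respect to $\cM$. Combining this with the computation of $\add(\cM)$ and $\cof(\cM)$ from the previous paragraph gives precisely the forced statement in the corollary, so $\bbQ^{\kappa,\lambda}$ is as required. One should double-check that all the absoluteness issues are trivial — $\Sigma^0_2$, $\Pi^0_3$, "ccc", and the cardinal arithmetic $\lambda^{\aleph_0}$ are either evaluated in the extension or preserved, and ccc guarantees $\kappa,\lambda$ remain cardinals of the same (regular) cofinality — but this is routine.

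I do not anticipate a genuine obstacle: every ingredient is an already-quoted theorem, and the argument is pure bookkeeping. The only point requiring a moment's care is the bookkeeping of Borel classes: Theorem \ref{modelwithmhg} delivers a $\Sigma^0_2$-base, Corollary \ref{ourcor} converts a monotone $\Sigma^0_\xi$ hull on $\cM$ into a monotone $\Pi^0_{\xi+1}$ hull on ${\tt Baire}$, and one must make sure these match up to give $\Pi^0_3$ (they do, with $\xi=2$). A secondary point is to confirm that "uncountable regular cardinal" in Corollary \ref{corabove} is the right hypothesis to both invoke Theorem \ref{modelwithmhg} (which only asks for uncountable cofinality) and to force $\add(\cM)=\kappa$, $\cof(\cM)=\lambda$ via the $\min/\max$ formula — regularity makes $\cf(\kappa)=\kappa$ and $\cf(\lambda)=\lambda$, so the formula collapses to $\kappa$ and $\lambda$ respectively.
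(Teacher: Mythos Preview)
Your proposal is correct and matches the paper's approach exactly: the paper offers no proof beyond the sentence ``Putting the results quoted above together with Corollary \ref{ourcor} we obtain the following,'' and you have spelled out precisely that combination. The only minor wrinkle is that in your final step you invoke a monotone $\Sigma^0_2$ hull on $\cM$, which the quoted Proposition 3.6 (stated only for $\Pi^0_\xi$) does not literally provide; but your alternative $\Pi^0_3$ route---$\Sigma^0_2$--base $\subseteq\Pi^0_3$--base $\Rightarrow$ monotone $\Pi^0_3$ hull on $\cM$ $\Rightarrow$ (Corollary \ref{ourcor}, $\Pi^0_\xi$ case with $\xi=3$) monotone $\Pi^0_3$ hull on {\tt Baire}---is fully justified and closes the argument.
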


\section{Open problems}

\begin{problem}
\label{prob1}
\begin{enumerate}
\item Can we find, in ZFC, a subgroup of $\mbR^n$ which belongs to
  $\cN\setminus \cM$ ($\cM\setminus \cN$, respectively) ?
\item Which assumptions are necessary to find such subgroups in a locally 
compact group (with complete Haar measure)?
\end{enumerate}
\end{problem}

If the answer to Problem \ref{prob1}(1) is positive, then in ZFC there is no
translation invariant Borel hull on $\cN$ ($\cM$, respectively). Should the
existence of such subgroups be independent from ZFC, we still may suspect
that there are no translation invariant Borel hulls, or at least that
there are no translation invariant monotone Borel hulls. 
  
\begin{problem}
Is it consistent that there are translation invariant Borel hulls on $\cN$ ($\cM$,
respectively)?  If yes, can we additionally have that this hull
operation is monotone?
\end{problem}

The following problem is motivated by Theorem \ref{t8}(2). 

\begin{problem}
Let $\cI\in \{\cN,\cM\}$. Assume that there exists a $\kappa$--Luzin set for
$\cI$. Does there exists a subgroup of $\mbR^n$ which is also a
$\kappa$--Luzin set for $\cI$ ?
\end{problem}

Every set with Baire property has a $\Sigma^0_2$ hull, so one may wonder 
if in Corollary \ref{corabove} we may claim the existence of monotone 
$\Sigma^0_2$ hulls. Or even:

\begin{problem}
Is it consistent that there is a monotone $\Pi^0_3$ hull operation on 
{\tt Baire} but there is no monotone $\Sigma^0_2$ hull operation on 
{\tt Baire} (with respect to $\cM$) ?
\end{problem}

We do not know if an analogue of Theorem \ref{thmmeager} holds for the 
null ideal. 

\begin{problem}
[Cf. Balcerzak and Filipczak {\cite[Question 2.23]{BaFi11}}]
Is it consistent that there exists a monotone Borel hull on the ideal 
$\cN$ of Lebesgue negligible subsets of $\mbR$ (with respect to $\cN$) 
but there is no such hull on the algebra $\cS_\cN$ of Lebesgue 
measurable sets? In particular, is it consistent that ${\rm add}(\cN)=
{\rm cof}(\cN)$ but there is no monotone Borel hull operation on 
$\cS_\cN$ ?
\end{problem}


\begin{thebibliography}{10}

\bibitem{BCS58}
Beck A., Corson H.H., and Simon B.
\newblock {The interior points of the product of two subsets of a locally
  compact group}.
\newblock {\em Proc. Amer. Math. Soc.}, 9:648--652, 1958.

\bibitem{BaFi11}
Marek Balcerzak and Tomasz Filipczak.
\newblock {On monotone hull operations}.
\newblock {\em Mathematical Logic Quarterly}, 57:186--193, 2011.

\bibitem{BFN11}
Artur Bartoszewicz, Ma{\l}gorzata Filipczak, and Tomasz Natkaniec.
\newblock {On Smital properties}.
\newblock {\em Topology Appl.}, 158:2066--2075, 2011.

\bibitem{BaJu95}
Tomek Bartoszy\'nski and Haim Judah.
\newblock {\em {Set Theory: On the Structure of the Real Line}}.
\newblock A K Peters, Wellesley, Massachusetts, 1995.

\bibitem{Buk11}
Lev Bukovsk\'{y}. 
\newblock {The Structure of the Real Line}.
\newblock Birkh\"{a}user, 2011.

\bibitem{Ci89}
Jacek Cicho\'n.
\newblock {On two-cardinal properties of ideals}.
\newblock {\em Transactions of the American Mathematical Society},
  314:693--708, 1989.

\bibitem{ElMa09}
M\'{a}rton Elekes and Andr\'{a}s M\'{a}th\'{e}.
\newblock {Can we assign the Borel hulls in a monotone way?}
\newblock {\em Fundamenta Mathematicae}, 205:105--115, 2009.

\bibitem{MaZe09}
Tam\'{a}s M\'{a}trai and Miroslav Zelen\'{y}.
\newblock {On monotone presentations of Borel sets}.
\newblock {\em Real Anal. Exchange}, 34:311--317, 2009.

\bibitem{Mc50}
Edward~James McShane.
\newblock {Images of sets satisfying the condition of Baire}.
\newblock {\em Ann. of Math. (2)}, 51:380--386, 1950.

\bibitem{MiSt06}
Arnold W.~Miller and Juris Stepr\={a}ns. 
\newblock {The number of translates of a closed nowhere dense set 
required to cover a Polish group}. 
\newblock {\em Annals of Pure and Applied Logic}, 140:52--59, 2006.  

\bibitem{RoSh:972}
Andrzej Roslanowski and Saharon Shelah.
\newblock {Monotone hulls for ${\mathcal N}\cap {\mathcal M}$}.
\newblock {\em Periodica Mathematica Hungarica}.
\newblock 1007.5368.

\bibitem{Sm68}
Jaroslav Sm\'{\i}tal.
\newblock {On the functional equation $f(x+y)=f(x)+f(y)$}.
\newblock {\em Rev. Roumaine Math. Pures Appl.}, 13:555--561, 1968.

\end{thebibliography}

\end{document}